\providecommand{\U}[1]{\protect\rule{.1in}{.1in}}
\newtheorem{theorem}{Theorem}[section]
\newtheorem{lemma}[theorem]{Lemma}
\newtheorem{proposition}[theorem]{Proposition}
\newenvironment{proof}[1][Proof]{\noindent\textbf{#1.} }{\ \rule{0.5em}{0.5em}}
\begin{document}

\title{The Spectra of Arrangement Graphs}
\author{Jos\'{e} Araujo\\Facultad de Ciencias Exactas, UNICEN \\Tandil, Argentina.
\and Tim Bratten\\Facultad de Ciencias Exactas, UNICEN\\Tandil, Argentina. }
\date{}
\maketitle

\begin{abstract}
Arrangement graphs were introduced for their connection to computational
networks and have since generated considerable interest in the literature. In
a pair of recent articles by Chen, Ghorbani and Wong, the eigenvalues for the
adjacency matrix of an (n,k)-arrangement graph are studied and shown to be
integers. In this manuscript, we consider the adjaceny matrix directly in
terms of the representation theory for the symmetric group. Our point of view
yields a simple proof for an explicit fomula of the associated spectrum in
terms of the characters of irreducibile representations evaluated on a
transposition. As an application we prove a conjecture raised by Chen,
Ghorbani and Wong.

\end{abstract}

\section{Introduction}

Since arrangement graphs were first introduced in the seminal paper
\cite{day}, there has been considerable interest in the literature. In two
recent articles, \cite{chen1} and \cite{chen2}, the eigenvalues of the
adjacency matrix are studied. The main result in \cite{chen2} is that the
eigenvalues are integers. In this article we study the adjacency matrix from
the perspective of the representation theory of symmetric groups. In
particular, we consider the representation associated to the arrangement graph
and the corresponding equivariant operator associated to the adjacency matrix.
Our approach leads to a simple derivation for an explicit formula of the
spectra of $(n,k)$-arrangement graphs in terms of the characters of
irreducible representations evaluated on a transposition.

It seems pertinent to point out that the problem of translating our formula,
constructed in representational theoretic language, into a combinatorial
algorithm to compute the eigenvalues, has already been solved. In particular,
the input into this algorithm is a finite sequence $\lambda=(\lambda
_{1},\ldots,\lambda_{j})$ \ of positive integers, called \emph{a partition of}
$k$, that satisfies
\[
\lambda_{1}\geq\lambda_{2}\geq\cdots\geq\lambda_{j}\text{ \ and \ }\lambda
_{1}+\lambda_{2}+\cdots+\lambda_{j}=k.
\]
A formula due to Frobenius directly calculates the contribution to the
eigenvalue associated to the corresponding irreducible representation of the
symmetric group $S_{k}$. The second part of the algorithm constructs a
partition $\mu$ of $n$ via a combinatorial rule on the Young diagram
$Y(\lambda)$ called Pieri's formula. Once again Frobenius's formula calculates
the contribution to the eigenvalue, this time associated to the corresponding
irreducible representation of the symmetric group $S_{n}$. The one other
component of the formula is the binomial coefficient
\[
\binom{n-k}{2}.
\]

As an application of our formula, we prove a conjecture by Chen, Ghorbani and
Wong that states for $k$ fixed and $n$ large, $-k$ is the only negative
eigenvalue in the spectrum of the $\left(  n,k\right)  $-arrangement graph.

\section{The representation associated to an arrangement graph}

Suppose $\Gamma$ is a graph and $G$ is a subgroup of the automorphism group of
$\Gamma$. Let $V$ be the set of vertices of $\Gamma$ and let $\mathbb{V}$ be
the complex vector space with basis vectors from $V$. The group $G$ acts
naturally on $\mathbb{V}$. We refer to this as \emph{the associated }%
$G$-\emph{representation}. Let $U$ denote the adjacency matrix of $\Gamma$.
The matrix $U$ is directly related to the linear operator $\Upsilon$ on
$\mathbb{V}$ defined by
\[
\Upsilon\left(  v\right)  =\sum_{w\sim v}w
\]
where $v$ and $w$ are vertices and $w\sim v$ if they share an edge. In
particular, the matrix for $\Upsilon$ in a basis of vertices is $U$. Observe
that
\[
\Upsilon\left(  g\cdot v\right)  =g\cdot \Upsilon\left(  v\right)  \text{ \ for
each }v\in\mathbb{V}.
\]
We call $\Upsilon$ \emph{the equivariant operator associated to} $U$.

We now introduce the graphs to be considered. For each natural number $n$, let
$I_{n}=\left\{  1,2,\ldots,n\right\}  $. It will be convenient for us to
identify the symmetric group $S_{n}$ with the permutation group of $I_{n}$.
Suppose $k$ is a natural number and $k\leq n$. By definition, \emph{a}
$k$-\emph{permutation} $\sigma$ is an injective map
\[
\sigma:I_{k}\rightarrow I_{n}\text{.}%
\]
Let $V(n,k)$ be the set of $k$-permutations. \emph{The arrangement graph}
$A(n,k)$ is the graph whose vertices are the elements of $V(n,k)$, where two
$k$-permutations are adjoined by an edge if they agree as functions on exactly
$k-1$ elements of $I_{k}$. The symmetric group $G=S_{n}$, acts naturally on
$A(n,k)$ by the formula
\[
\pi\cdot\sigma=\pi\circ\sigma
\]
where $\pi\in S_{n}$ and $\sigma\in V(n,k)$. Note that when $k=n$ then
$A(n,k)$ is an edgeless graph and the adjacency matrix $U=0$, so we may assume
$k<n$.

Let $\mathbb{V}$ be the associated $S_{n}$-representation. Our first task is
to show $\mathbb{V}$ is induced from a representation of a subgroup. In
particular, the symmetric group $S_{k}$, of $I_{k}$, is naturally identified
with a subgroup of $S_{n}$. We also identify the permutations $S(I_{n}-I_{k}%
)$, of the set $I_{n}-I_{k}$, with a subgroup of $S_{n}$ . The subgroup $H$ of
$S_{n}$ generated by these two subgroups is naturally isomorphic to the direct
product
\[
H=S_{k}\times S\left(  I_{n}-I_{k}\right)  .
\]
We will also identify $S_{k}$ with a subset of the vertices: $S_{k}\subseteq
V(n,k)$. If $\mathbb{W}$ is the subspace of $\mathbb{V}$ with basis vectors
from $S_{k}$ then $\mathbb{W}$ is an $H$ invariant subspace. In particular,
$\mathbb{W}$ is the regular representation of $S_{k}$ with a trivial action of
$S\left(  I_{n}-I_{k}\right)  $.

To show that the representation of $G=S_{n}$ in $\mathbb{V}$ is induced from
the representation of $H$ in $\mathbb{W}$ we use the following lemma. The
proof is an exercise in the concept of induced representations and is left to
the reader.

\begin{lemma}
Suppose $\mathbb{V}$ is a representation of a finite group $G$. Let
$H\subseteq G$ be a subgroup and $\mathbb{W}\subseteq\mathbb{V}$ an
$H$-invariant subspace. Suppose that:\newline\ \ \ \ \ (1) The $G$-module
generated by $\mathbb{W}$ is $\mathbb{V}$ and \newline\ \ \ \ \ (2) For each
$g\in G-H$ we have $g\cdot W\cap W=\left\{  0\right\}  $ \newline Then there
is a natural isomorphism
\[
\mathbb{V}\cong\text{Ind}_{H}^{G}(\mathbb{W}).
\]

\end{lemma}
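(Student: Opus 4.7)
My approach is to write down the obvious $G$-equivariant map from $\text{Ind}_{H}^{G}(\mathbb{W})$ onto $\mathbb{V}$ and then verify it is an isomorphism. Fix a complete set of left coset representatives $g_{1}=e,g_{2},\ldots,g_{m}$ for $H$ in $G$, so every element of $G$ has a unique expression $g_{i}h$ with $h\in H$. Realize the induced representation as $\text{Ind}_{H}^{G}(\mathbb{W})=\bigoplus_{i=1}^{m}g_{i}\otimes\mathbb{W}$ with its standard $G$-action (namely $g\cdot(g_{i}\otimes w)=g_{j}\otimes(hw)$ whenever $gg_{i}=g_{j}h$ with $h\in H$), and define
\[
\phi:\text{Ind}_{H}^{G}(\mathbb{W})\longrightarrow\mathbb{V},\qquad g_{i}\otimes w\longmapsto g_{i}\cdot w,
\]
extended by linearity. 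The $H$-invariance of $\mathbb{W}$ ensures $\phi$ is well defined independent of the choice of representatives, and a routine check shows it is $G$-equivariant.

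Surjectivity of $\phi$ is immediate from hypothesis (1): the image is a $G$-submodule containing $\mathbb{W}$, hence equals $\mathbb{V}$. The substance of the proof is thus injectivity, which is equivalent to showing that the translates $g_{1}\mathbb{W},\ldots,g_{m}\mathbb{W}$ form an internal direct sum inside $\mathbb{V}$. Applying hypothesis (2) to $g=g_{i}^{-1}g_{j}\in G-H$ for each $i\neq j$ gives the pairwise trivial intersections $g_{i}\mathbb{W}\cap g_{j}\mathbb{W}=\{0\}$.

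The main obstacle is promoting these pairwise vanishing intersections to full linear independence of the family $\{g_{i}\mathbb{W}\}_{i=1}^{m}$, since pairwise disjointness of subspaces alone is strictly weaker than their being independent. My plan is a minimal-support argument on a hypothetical nontrivial relation $\sum_{i\in J}g_{i}w_{i}=0$ with all $w_{i}\neq 0$ and $|J|$ minimal: the case $|J|=1$ is immediate, and for $|J|\geq 2$ I would pick $i_{0}\in J$, rearrange to $w_{i_{0}}=-\sum_{j\in J\setminus\{i_{0}\}}(g_{i_{0}}^{-1}g_{j})w_{j}$ (so that $w_{i_{0}}\in\mathbb{W}$ while each $g_{i_{0}}^{-1}g_{j}$ lies outside $H$), and then play the $H$-action on both sides against condition (2) to shrink the support, contradicting minimality. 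Once linear independence is in place, $\phi$ is a $G$-equivariant bijection, yielding the desired natural isomorphism.
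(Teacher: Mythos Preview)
The paper offers no proof of this lemma (it is ``left to the reader''), so there is nothing there to compare against. More to the point, your own diagnosis is correct and your proposed repair cannot work: the step ``play the $H$-action on both sides against condition (2) to shrink the support'' is not just vague, it is impossible to carry out, because the lemma as stated is \emph{false}.

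Here is a counterexample. Take $G=\mathbb{Z}/3\mathbb{Z}=\langle g\rangle$, $H=\{e\}$, and let $\mathbb{V}=\mathbb{C}^{2}$ with $g$ acting by $\mathrm{diag}(\omega,\omega^{2})$, where $\omega=e^{2\pi i/3}$. Put $\mathbb{W}=\mathbb{C}\,(1,1)$. Then $g\cdot(1,1)=(\omega,\omega^{2})$ and $g^{2}\cdot(1,1)=(\omega^{2},\omega)$; any two of these three vectors are linearly independent, so condition~(2) holds, and two of them already span $\mathbb{V}$, so condition~(1) holds. Yet $\dim\text{Ind}_{H}^{G}(\mathbb{W})=3\neq 2=\dim\mathbb{V}$, so the surjection $\phi$ you wrote down has a nontrivial kernel. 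Concretely, $(1,1)+(\omega,\omega^{2})+(\omega^{2},\omega)=0$ is a minimal nontrivial relation among the translates, and since $H$ is trivial there is no $H$-action to exploit; your minimal-support argument has nowhere to go.

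What makes the paper's application (Proposition~2.2) valid is structure not captured by hypotheses (1) and (2): there $\mathbb{V}$ is a permutation module with basis $V(n,k)$, the subspace $\mathbb{W}$ is spanned by the subset $S_{k}$ of that basis, and the proof of Proposition~2.2 actually establishes that for $\pi\notin H$ the \emph{sets} $\pi\cdot S_{k}$ and $S_{k}$ are disjoint. It follows that the coset translates $g_{i}\cdot S_{k}$ are pairwise disjoint subsets of a common basis, and subspaces spanned by disjoint pieces of a basis are automatically independent. Equivalently, one may simply count: $\dim\mathbb{V}=\left|V(n,k)\right|=n!/(n-k)!=[G:H]\cdot\left|S_{k}\right|=\dim\text{Ind}_{H}^{G}(\mathbb{W})$, so your surjective $\phi$ is forced to be an isomorphism. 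Either of these observations is the genuine content of the ``exercise''; the abstract hypotheses (1) and (2) alone do not suffice.
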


\begin{proposition}
Suppose $G=S_{n}$ and $\mathbb{V}$ is the representation associated to the
arrangement graph. If $\mathbb{W}$ and $H$ are defined as above then
\[
\mathbb{V}\cong\text{Ind}_{H}^{G}(\mathbb{W}).
\]

\end{proposition}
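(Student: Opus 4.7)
The plan is to apply the lemma directly: verify conditions (1) and (2) with $G = S_n$, $H = S_k \times S(I_n - I_k)$, and $\mathbb{W}$ the subspace spanned by the vertices in $S_k \subseteq V(n,k)$.

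First I would check condition (1). Since $\mathbb{W}$ is spanned by the vertices lying in $S_k$, and $\mathbb{V}$ is spanned by all of $V(n,k)$, it suffices to show every $k$-permutation $\sigma : I_k \to I_n$ lies in the $G$-orbit of some element of $S_k$. Given $\sigma$, I would extend it to a permutation $\pi \in S_n$ by choosing any bijection $I_n - I_k \to I_n - \sigma(I_k)$; then $\pi \cdot \mathrm{id}_{I_k} = \sigma$, where $\mathrm{id}_{I_k} \in S_k \subseteq V(n,k)$. Hence the $G$-module generated by $\mathbb{W}$ contains every basis vertex and so equals $\mathbb{V}$.

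Next I would verify condition (2). Because $G$ permutes the basis $V(n,k)$ of $\mathbb{V}$, the subspace $g \cdot \mathbb{W}$ is spanned by the vertices $g \circ \tau$ for $\tau \in S_k$, and $g \cdot \mathbb{W} \cap \mathbb{W} \neq \{0\}$ is equivalent to the existence of $\tau_1, \tau_2 \in S_k$ with $g \circ \tau_1 = \tau_2$ as $k$-permutations. Unpacking this identity, $g$ restricted to $\tau_1(I_k) = I_k$ equals $\tau_2 \circ \tau_1^{-1} \in S_k$, so $g$ sends $I_k$ into $I_k$; being a permutation of $I_n$, it must preserve both $I_k$ and $I_n - I_k$ setwise, placing $g$ in $H$. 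Contrapositively, $g \in G - H$ forces $g \cdot \mathbb{W} \cap \mathbb{W} = \{0\}$.

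Once both hypotheses are established, the lemma immediately yields the isomorphism $\mathbb{V} \cong \mathrm{Ind}_H^G(\mathbb{W})$. I do not expect a serious obstacle here: the content of the proposition is essentially the observation that $S_n$ acts transitively on $V(n,k)$ with stabilizer (of the embedding $I_k \hookrightarrow I_n$) equal to $S(I_n - I_k)$, so that the permutation representation on $V(n,k)$ is induced from the regular representation of $S_k$ with trivial $S(I_n - I_k)$ factor. The only mild point of care is being explicit that $\mathbb{W}$ is $H$-invariant with the correct $H$-structure; this is immediate since $S_k$ acts on $S_k$ by left multiplication (the regular representation) and $S(I_n - I_k)$ fixes every $\tau \in S_k$ pointwise as functions $I_k \to I_n$.
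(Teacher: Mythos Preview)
Your proposal is correct and follows essentially the same route as the paper: you verify the two hypotheses of the preceding lemma and then invoke it. The only differences are cosmetic---you argue condition (2) by contrapositive and supply an explicit extension for condition (1), whereas the paper proceeds directly and leaves the extension implicit.
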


\begin{proof}
If $\pi\notin H$ then $\pi(j)\in I_{n}-I_{k}$ for some $j\in I_{k}$. Hence,
for each basis vector $\sigma\in S_{k}$ it follows $\pi\cdot\sigma\notin
S_{k}$ and therefore
\[
\pi\cdot\mathbb{W}\cap\mathbb{W}=\left\{  0\right\}  \text{.}%
\]
Next observe that each basis vector in $V(n,k)$ can be written as $\pi
\cdot\sigma$ for some $\sigma\in S_{k}$. Therefore $G$-module generated by
$\mathbb{W}$ is $\mathbb{V}$ and the result follows from the previous lemma.
\end{proof}

The irreducible representations of the symmetric group are parametrized by
partitions. In particular, \emph{a partition} $\lambda$ \emph{of a positive
integer} $m$ is a finite sequence of positive integers $\lambda=(\lambda
_{1},\ldots,\lambda_{j})$ such that
\[
\lambda_{1}\geq\lambda_{2}\geq\cdots\geq\lambda_{j}\text{ \ and \ }\lambda
_{1}+\lambda_{2}+\cdots+\lambda_{j}=m
\]
If $\lambda$ is a partition of $m$ we write $\lambda\vdash m$ let $S^{\lambda
}$ denote a realization of the corresponding irreducible representation for
$S_{m}$. If $\lambda\vdash k$ we let
\[
\mathbb{W}_{\lambda}\subseteq\mathbb{W}%
\]
denote the corresponding $S_{k}$-isotypic component in $\mathbb{W}$. Since
$\mathbb{W}$ is the regular representation for $S_{k}$ it follows that
\[
\mathbb{W}_{\lambda}\cong m_{\lambda}S^{\lambda}%
\]
where $m_{\lambda}$ is the dimension of $S^{\lambda}$.

\begin{proposition}%
\[
\mathbb{V}=%
{\displaystyle\bigoplus\limits_{\lambda\vdash k}}
\text{Ind}_{H}^{G}(\mathbb{W}_{\lambda})
\]

\end{proposition}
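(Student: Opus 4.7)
The plan is to combine the previous proposition with additivity of induction. By the previous proposition we already have $\mathbb{V}\cong\mathrm{Ind}_{H}^{G}(\mathbb{W})$, so it suffices to decompose $\mathbb{W}$ as an $H$-module into the pieces $\mathbb{W}_{\lambda}$ and invoke the fact that induction commutes with direct sums.

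First I would verify that the $S_{k}$-isotypic subspace $\mathbb{W}_{\lambda}\subseteq\mathbb{W}$ is in fact an $H$-invariant subspace of $\mathbb{W}$. This is immediate because $S(I_{n}-I_{k})$ acts trivially on $\mathbb{W}$, so any $S_{k}$-stable subspace is automatically $H$-stable. Next I would write the isotypic decomposition of the regular representation of $S_{k}$,
\[
\mathbb{W}=\bigoplus_{\lambda\vdash k}\mathbb{W}_{\lambda},
\]
and note that this is simultaneously the decomposition of $\mathbb{W}$ as an $H$-module, where each $\mathbb{W}_{\lambda}$ carries the $H$-action obtained by letting $S(I_{n}-I_{k})$ act trivially on $m_{\lambda}S^{\lambda}$.

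Finally, I would apply the standard fact that induction from a subgroup is an exact additive functor, hence
\[
\mathrm{Ind}_{H}^{G}(\mathbb{W})=\mathrm{Ind}_{H}^{G}\!\left(\bigoplus_{\lambda\vdash k}\mathbb{W}_{\lambda}\right)=\bigoplus_{\lambda\vdash k}\mathrm{Ind}_{H}^{G}(\mathbb{W}_{\lambda}),
\]
and combine this with the previous proposition to conclude.

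There is really no serious obstacle here; the statement is essentially a bookkeeping consequence of the preceding proposition together with the elementary fact that $\mathbb{W}$ is the regular representation of $S_{k}$ inflated to $H$. The only point that merits a sentence of justification is the $H$-invariance of each $\mathbb{W}_{\lambda}$, which, as noted, is automatic from the trivial action of $S(I_{n}-I_{k})$ on $\mathbb{W}$.
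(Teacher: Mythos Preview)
Your proposal is correct and follows essentially the same argument as the paper: combine the previous proposition $\mathbb{V}\cong\mathrm{Ind}_{H}^{G}(\mathbb{W})$ with the isotypic decomposition $\mathbb{W}=\bigoplus_{\lambda\vdash k}\mathbb{W}_{\lambda}$ and the additivity of induction. Your extra remark that each $\mathbb{W}_{\lambda}$ is $H$-invariant (because $S(I_{n}-I_{k})$ acts trivially) is a small but welcome justification that the paper leaves implicit.
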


\begin{proof}
The result is clear since
\[
\mathbb{V}=\text{Ind}_{H}^{G}(\mathbb{W})=\text{Ind}_{H}^{G}(%
{\displaystyle\bigoplus\limits_{\lambda\vdash k}}
\mathbb{W}_{\lambda})\cong%
{\displaystyle\bigoplus\limits_{\lambda\vdash k}}
\text{Ind}_{H}^{G}(\mathbb{W}_{\lambda}).
\]

\end{proof}

\section{The equivariant operator associated to the adjacency matrix}

We need to introduce some notation. If $p$ is any integer, it will be
convenient to define
\[
\binom{p}{2}=\frac{p(p-1)}{2}.
\]
Suppose $m$ is a positive integer and let $i,j$ be positive integers with
$i<j\leq m$. We let $(ij)$ denote the corresponding transposition in $S_{m}$.
Let $\mathbb{C}$ denote the complex numbers. We define an element of the group
algebra $T_{m}\in\mathbb{C}\left[  S_{m}\right]  $ according to the formula
\[
T_{m}=%
{\displaystyle\sum\limits_{1\leq i<j\leq m}}
(ij).
\]
$T_{m}$ acts naturally on any representation of $S_{m}$. Let $\Upsilon
:\mathbb{V}\rightarrow\mathbb{V}$ be the equivariant operator associated to
$U$.

\begin{lemma}
The restriction $\Upsilon:\mathbb{W}\rightarrow\mathbb{V}$ of $\Upsilon$ to
$\mathbb{W}$ is given by the formula
\[
\Upsilon(w)=T_{n}\cdot w-T_{k}\cdot w-\binom{n-k}{2}w\text{ \ for }%
w\in\mathbb{W}\text{.}%
\]

\end{lemma}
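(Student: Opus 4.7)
The plan is to fix a basis vector $\sigma \in S_k \subseteq V(n,k)$ and compute $T_n \cdot \sigma - T_k \cdot \sigma - \binom{n-k}{2}\sigma$ directly; by linearity this will suffice on all of $\mathbb{W}$. The only nontrivial term is $T_n \cdot \sigma = \sum_{1 \leq i < j \leq n} (ij) \circ \sigma$, which I would expand by splitting the index pairs $\{i,j\}$ according to their intersection with $I_k$, using the crucial fact that $\sigma(I_k) = I_k$ when $\sigma \in S_k$.

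Three cases arise. If $\{i,j\} \subseteq I_n - I_k$, the transposition fixes the image of $\sigma$ pointwise, so $(ij)\circ\sigma = \sigma$, contributing a total of $\binom{n-k}{2}\sigma$. If $\{i,j\} \subseteq I_k$, the transposition lies in $S_k$, so summing over these pairs reproduces precisely $T_k \cdot \sigma$. Finally, if $i \in I_k$ and $j \in I_n - I_k$, then writing $p = \sigma^{-1}(i)$ shows that $(ij)\circ\sigma$ differs from $\sigma$ only at the position $p$, where it takes the new value $j$; hence $(ij)\circ\sigma$ is a neighbor of $\sigma$ in $A(n,k)$.

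Putting the three cases together gives $T_n \cdot \sigma = \binom{n-k}{2}\sigma + T_k \cdot \sigma + N(\sigma)$, where $N(\sigma)$ is the sum of the $k(n-k)$ vertices produced in the third case. The one substantive point remaining is to verify that $N(\sigma) = \Upsilon(\sigma)$, i.e.\ that each neighbor of $\sigma$ arises exactly once from a case-three pair. This is where injectivity of $k$-permutations is essential: if $\tau$ is a neighbor of $\sigma$ differing from it only at the position $p$, then injectivity forces $\tau(p) \in I_n - (I_k \setminus \{\sigma(p)\})$, and since $\tau(p) \neq \sigma(p)$ we must have $\tau(p) \in I_n - I_k$. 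Therefore $\tau = (ij)\circ\sigma$ with $i = \sigma(p) \in I_k$ and $j = \tau(p) \in I_n - I_k$, and this assignment is evidently a bijection between case-three pairs and neighbors of $\sigma$. Rearranging then gives the claimed formula, and I do not anticipate any further obstacle.
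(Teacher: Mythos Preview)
Your proposal is correct and follows essentially the same approach as the paper: both arguments fix a basis vector $\sigma\in S_k$, split the transpositions appearing in $T_n$ according to whether their entries lie in $I_k$ or $I_n-I_k$, and identify the three resulting contributions with $T_k\cdot\sigma$, $\binom{n-k}{2}\sigma$, and $\Upsilon(\sigma)$. Your write-up is in fact somewhat more careful than the paper's, which is terse about the bijection between mixed transpositions and neighbours of $\sigma$; your explicit verification of that bijection via injectivity is a welcome addition.
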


\begin{proof}
Let $\sigma:I_{k}\rightarrow I_{n}$ be a $k$-permutation corresponding to a
permutation of $I_{k}$. If a vertex $\gamma\in V(n,k)$ shares an edge with
$\sigma$ then there is exactly one value $i\in I_{k}$ such that $\sigma
(i)\neq\gamma(i)$ and $\gamma(i)=j\in I_{n}-I_{k}$. In particular
$\gamma(i)=j$ with $k<j$ and $\gamma=(ij)\cdot\sigma$. The other type of
transpositions that appear in $T_{n}-T_{k}$ have the form $(ij)$ with
$k<i<j\leq n$. and are from the group $S\left(  I_{n}-I_{k}\right)  $. Each of
this second type fixes every vector in $\mathbb{W}$. Summing up over these two
types of transpositions gives the desires result.
\end{proof}

If $\lambda\vdash m$, we let
\[
\chi_{\lambda}:S_{m}\rightarrow\mathbb{C}%
\]
be the character associated to the irreducible representation $S^{\lambda}$.
It is well known that $\chi_{\lambda}(g)$ is an integer for each $g\in G$.

\begin{lemma}
$T_{m}$ is in the center of $\mathbb{C}\left[  S_{m}\right]  $ and acts on the
irreducible representation $S^{\lambda}$ according to the formula
\[
T_{m}\cdot v=\binom{m}{2}\frac{\chi_{\lambda}\left(  \tau\right)  }%
{\chi_{\lambda}\left(  1\right)  }v\text{ for }v\in S^{\lambda}%
\]
where $\tau$ is any transposition of $S_{m}$.
\end{lemma}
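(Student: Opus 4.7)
The plan is to prove centrality by recognizing $T_m$ as a class sum, then pin down the scalar by taking a trace.

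First I would observe that the set of all transpositions in $S_m$ forms a single conjugacy class: for any $\pi\in S_m$ one has $\pi(ij)\pi^{-1}=(\pi(i)\,\pi(j))$, so conjugation permutes the summands of $T_m$. Therefore $\pi T_m \pi^{-1}=T_m$ for every $\pi\in S_m$, which shows $T_m$ lies in the center of $\mathbb{C}[S_m]$ (equivalently, $T_m$ is one of the standard class-sum generators of $Z(\mathbb{C}[S_m])$).

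Since $T_m$ is central and $S^\lambda$ is irreducible, Schur's lemma forces $T_m$ to act on $S^\lambda$ by a scalar $c_\lambda\in\mathbb{C}$. To identify $c_\lambda$, I would compute the trace of $T_m$ on $S^\lambda$ in two ways. On the one hand, the trace equals $c_\lambda\cdot\chi_\lambda(1)$, since $\chi_\lambda(1)$ is the dimension of $S^\lambda$. On the other hand, summing the character over the transpositions and using that all transpositions are conjugate (hence share a common character value $\chi_\lambda(\tau)$), we get
\[
\operatorname{tr}_{S^\lambda}(T_m)=\sum_{1\le i<j\le m}\chi_\lambda((ij))=\binom{m}{2}\chi_\lambda(\tau).
\]
Equating the two expressions and dividing by $\chi_\lambda(1)\neq 0$ yields
\[
c_\lambda=\binom{m}{2}\frac{\chi_\lambda(\tau)}{\chi_\lambda(1)},
\]
which is the claimed formula.

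There is no real obstacle here: the only subtlety is knowing that the class sum of a conjugacy class is always central, and that Schur's lemma applies over $\mathbb{C}$ to force scalar action on an irreducible. Both are standard tools from the representation theory of finite groups, so the argument is essentially a two-line trace computation once centrality is observed.
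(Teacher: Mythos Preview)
Your proof is correct and follows essentially the same route as the paper: recognize $T_m$ as the class sum over the conjugacy class of transpositions to get centrality, then take the trace on $S^{\lambda}$ to extract the scalar. The only difference is that you name Schur's lemma explicitly and spell out the conjugation $\pi(ij)\pi^{-1}=(\pi(i)\,\pi(j))$, whereas the paper leaves these implicit.
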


\begin{proof}
We use the fact the set $C$ of transpositions is a conjugacy class in $S_{m}$.
Since
\[
T_{m}=%
{\displaystyle\sum\limits_{g\in C}}
g
\]
it follows immediately $T_{m}$ is in the center of $\mathbb{C}\left[
S_{m}\right]  $ and therefore acts by a scalar on an irreducible
representation. If $\alpha\in\mathbb{C}$ is the eigenvalue for the action of
$T_{m}$ on $S^{\lambda}$ then taking traces yields
\[
\chi_{\lambda}\left(  1\right)  \alpha=%
{\displaystyle\sum\limits_{g\in C}}
\chi_{\lambda}(g)
\]
which is the desired result.
\end{proof}

Suppose $\lambda\vdash k$ and recall that $\mathbb{W}_{\lambda}=m_{\lambda
}S^{\lambda}$ is the corresponding isotypic component in $\mathbb{W}$.

\begin{proposition}
The restriction of $\Upsilon$ to Ind$_{H}^{G}(\mathbb{W}_{\lambda})$ is given
by the formula
\[
\Upsilon(v)=T_{n}\cdot v-\left(  \binom{k}{2}\frac{\chi_{\lambda}\left(
\tau\right)  }{\chi_{\lambda}\left(  1\right)  }+\binom{n-k}{2}\right)
v\text{\ \ for }v\in\text{Ind}_{H}^{G}(\mathbb{W}_{\lambda})\text{.}%
\]

\end{proposition}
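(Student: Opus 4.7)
The plan is to stitch together the two preceding lemmas and then propagate the resulting identity from $\mathbb{W}_{\lambda}$ to the full induced module by equivariance. The first lemma already gives
\[
\Upsilon(w) = T_{n}\cdot w - T_{k}\cdot w - \binom{n-k}{2} w \quad \text{for every } w\in\mathbb{W}.
\]
Because $S(I_{n}-I_{k})$ acts trivially on $\mathbb{W}$ and $\mathbb{W}_{\lambda}\cong m_{\lambda}S^{\lambda}$ as an $S_{k}$-module, the second lemma tells me that $T_{k}$ acts on $\mathbb{W}_{\lambda}$ as multiplication by the scalar $\binom{k}{2}\chi_{\lambda}(\tau)/\chi_{\lambda}(1)$. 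Substituting this scalar back into the displayed formula yields the desired identity, but so far only for $w\in\mathbb{W}_{\lambda}$.

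The second step is to extend from $\mathbb{W}_{\lambda}$ to all of $\text{Ind}_{H}^{G}(\mathbb{W}_{\lambda})$. By the previous proposition this induced module is spanned by vectors of the form $g\cdot w$ with $g\in G$ and $w\in\mathbb{W}_{\lambda}$, so it suffices to verify the formula on such vectors. I would use two facts in tandem: $\Upsilon$ is $G$-equivariant by construction, and $T_{n}$ lies in the center of $\mathbb{C}[S_{n}]$, hence commutes with every $g\in G$. Writing $c_{\lambda}=\binom{k}{2}\chi_{\lambda}(\tau)/\chi_{\lambda}(1)+\binom{n-k}{2}$, I would compute
\[
\Upsilon(g\cdot w) = g\cdot\Upsilon(w) = g\cdot\bigl(T_{n}\cdot w - c_{\lambda} w\bigr) = T_{n}\cdot(g\cdot w) - c_{\lambda}(g\cdot w),
\]
and then finish by linearity.

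I do not expect a genuine obstacle; the argument is essentially a bookkeeping exercise. The one point that merits care is that $T_{k}\in\mathbb{C}[S_{k}]\subseteq\mathbb{C}[S_{n}]$ is \emph{not} central in $\mathbb{C}[S_{n}]$, so one cannot commute it past arbitrary $g\in G$ and replace it by a scalar on the whole induced module in a single stroke. This is exactly why the reduction of $T_{k}$ to a scalar must be performed inside $\mathbb{W}_{\lambda}$ first, and only afterward propagated by the $G$-action, which is legitimate because only the remaining operator $T_{n}$ needs to commute with $g$, and it does by centrality.
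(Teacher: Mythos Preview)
Your proposal is correct and follows essentially the same approach as the paper's own proof: combine the two lemmas to obtain the formula on $\mathbb{W}_{\lambda}$, then propagate to all of $\text{Ind}_{H}^{G}(\mathbb{W}_{\lambda})$ using the $G$-equivariance of $\Upsilon$ and the centrality of $T_{n}$. Your closing remark about why $T_{k}$ must be reduced to a scalar \emph{inside} $\mathbb{W}_{\lambda}$ before propagating is a nice clarification that the paper leaves implicit.
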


\begin{proof}
It follows immediately from the previous two lemmas that the formula is valid
for the restriction of $\Upsilon$ to the subspace $\mathbb{W}_{\lambda}$. Now
suppose $w\in\mathbb{W}_{\lambda}$ and $g\in S_{n}$. Since $\Upsilon$ is
$S_{n}$-equivariant we have
\[
\Upsilon(g\cdot w)=g\cdot \Upsilon(w)=g\cdot\left(  T_{n}\cdot w-\left(
\binom{k}{2}\frac{\chi_{\lambda}\left(  \tau\right)  }{\chi_{\lambda}\left(
1\right)  }+\binom{n-k}{2}\right)  w\right)  =
\]%
\[
T_{n}\cdot\left(  g\cdot w\right)  -\left(  \binom{k}{2}\frac{\chi_{\lambda
}\left(  \tau\right)  }{\chi_{\lambda}\left(  1\right)  }+\binom{n-k}%
{2}\right)  g\cdot w.
\]
Hence the result follows since every vector in Ind$_{H}^{G}(\mathbb{W}%
_{\lambda})$ is a sum of vectors of the form $g\cdot w$ for $g\in S_{n}$ and
$w\in\mathbb{W}_{\lambda}$.
\end{proof}

Observe that the previous lemma implies that the space Ind$_{H}^{G}%
(\mathbb{W}_{\lambda})$ is $\Upsilon$-invariant, since it's clear that
Ind$_{H}^{G}(\mathbb{W}_{\lambda})$ is $T_{n}$-invariant. Thus, by Proposition
2.3, it is sufficient to calculate $\Upsilon$ on the subspaces Ind$_{H}%
^{G}(\mathbb{W}_{\lambda})$ for $\lambda\vdash k$. Hence, we need to
understand the action of $T_{n}$ on Ind$_{H}^{G}(\mathbb{W}_{\lambda})$. In
order to do this, by Lemma 3.2, we need to decompose Ind$_{H}^{G}%
(\mathbb{W}_{\lambda})$ into irreducible $S_{n}$-modules. It is the
Littlewood-Richardson rule \cite{james}, or rather, the simpler version
referred to as Pieri's formula, that solves this last problem. To make the
application of the rule specific, we write $\mathbb{W}_{\lambda}=m_{\lambda
}S^{\lambda}\otimes\mathbb{C}$ where $\mathbb{C}$ represents the trivial
module for $S\left(  I_{n}-I_{k}\right)  $ and the group $H=S_{k}\times$
$S\left(  I_{n}-I_{k}\right)  $ acts by the formula $(\sigma,g)\cdot v\otimes
z=\sigma\cdot v\otimes g\cdot z$. Since
\[
\text{Ind}_{H}^{G}(\mathbb{W}_{\lambda})=\text{Ind}_{H}^{G}(m_{\lambda
}S^{\lambda}\otimes\mathbb{C})\cong m_{\lambda}\text{Ind}_{H}^{G}(S^{\lambda
}\otimes\mathbb{C})
\]
it suffices to decompose Ind$_{H}^{G}(S^{\lambda}\otimes\mathbb{C})$ into
irreducible $S_{n}$-modules, which is exactly what Pieri's formula does. To
describe the application of the rule in this context, we identify a partition
$\lambda$ with the corresponding Young diagram $Y(\lambda)$. Suppose
$\lambda\vdash k$ and $\mu\vdash n$. We write $\lambda\prec\mu$ if the Young
diagram $Y(\mu)$ for $\mu$ can be obtained from the diagram $Y(\lambda)$ by
adding at most one box to each column.

\begin{theorem}
[Pieri's formula]If $\lambda\vdash k$ then
\[
\text{Ind}_{H}^{G}(S^{\lambda}\otimes\mathbb{C})\cong%
{\displaystyle\bigoplus\limits_{\lambda\prec\mu}}
S^{\mu}\text{.}%
\]

\end{theorem}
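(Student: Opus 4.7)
The plan is to invoke Frobenius reciprocity to convert the statement into a restriction problem, and then translate that restriction problem into an identity between Schur functions. By Frobenius reciprocity, for each $\mu\vdash n$,
\[
\dim\text{Hom}_{G}\!\left(\text{Ind}_{H}^{G}(S^{\lambda}\otimes\mathbb{C}),\,S^{\mu}\right)=\dim\text{Hom}_{H}\!\left(S^{\lambda}\otimes\mathbb{C},\,\text{Res}_{H}^{G}S^{\mu}\right).
\]
Since the trivial representation of $S\left(I_{n}-I_{k}\right)\cong S_{n-k}$ is the irreducible module $S^{(n-k)}$, the right-hand side is the multiplicity of the irreducible $H$-module $S^{\lambda}\otimes S^{(n-k)}$ inside $\text{Res}_{H}^{G}S^{\mu}$.

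Next, I would apply the Frobenius characteristic map $\text{ch}$, which is an isometric ring isomorphism from the direct sum of the representation rings of the $S_{m}$ onto the ring of symmetric functions. Under $\text{ch}$, the irreducible $S^{\nu}$ goes to the Schur function $s_{\nu}$, the trivial $S_{n-k}$-module goes to the complete homogeneous symmetric function $h_{n-k}=s_{(n-k)}$, and induction from $H$ corresponds to multiplication of symmetric functions. Consequently, the multiplicity of $S^{\mu}$ in $\text{Ind}_{H}^{G}(S^{\lambda}\otimes\mathbb{C})$ equals the coefficient of $s_{\mu}$ in the product $s_{\lambda}\cdot h_{n-k}$.

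The proof then reduces to the classical Pieri identity for Schur functions,
\[
s_{\lambda}\cdot h_{m}=\sum_{\nu:\,\lambda\prec\nu}s_{\nu},
\]
summed over partitions $\nu$ obtained from $\lambda$ by adding a horizontal strip of $m$ boxes (no two in the same column), which is precisely the condition $\lambda\prec\nu$ in the paper's notation. This identity may be established via the Jacobi--Trudi determinantal formula for Schur functions, or combinatorially using the RSK correspondence together with the dual Cauchy identity. The one nontrivial ingredient is this Pieri identity itself; since it is treated in detail in the cited reference \cite{james}, I would invoke it rather than reprove it. Everything else in the argument is bookkeeping about induction, restriction, and the Frobenius isomorphism, so the genuine obstacle is entirely absorbed into the black-box citation.
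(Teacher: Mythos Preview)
Your proposal is correct, but note that the paper does not actually prove this theorem: it is stated as a known result (a special case of the Littlewood--Richardson rule) and attributed to \cite{james}. Your sketch goes further than the paper does, reducing the representation-theoretic statement to the classical Pieri identity $s_{\lambda}h_{m}=\sum_{\lambda\prec\nu}s_{\nu}$ via Frobenius reciprocity and the characteristic map, and then citing that identity; this is the standard derivation and is sound, but since the paper simply imports the result wholesale, there is nothing in its ``proof'' to compare against.
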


This gives us our main result:

\begin{theorem}
Suppose $\tau_{n}$ is any transposition of $S_{n}$ and $\tau_{k}$ is any
transposition of $S_{k}$. The eigenvalues of the arrangement graph $A(n,k)$
are the numbers of the form
\[
\binom{n}{2}\frac{\chi_{\mu}\left(  \tau_{n}\right)  }{\chi_{\mu}\left(
1\right)  }-\binom{k}{2}\frac{\chi_{\lambda}\left(  \tau_{k}\right)  }%
{\chi_{\lambda}\left(  1\right)  }-\binom{n-k}{2}%
\]
where $\lambda\vdash k$ and where $\mu\vdash n$ such that $\lambda\prec\mu$.
\end{theorem}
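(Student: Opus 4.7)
The plan is to assemble the eigenvalue formula by combining the four ingredients that the preceding sections have carefully set up, namely the decomposition of $\mathbb{V}$ (Proposition 2.3), the formula for $\Upsilon$ on each induced piece (Proposition 3.3), the scalar action of $T_m$ on an irreducible (Lemma 3.2), and Pieri's branching rule (Theorem 3.4). The strategy is to diagonalize $\Upsilon$ block by block: reduce the eigenvalue problem for $U$ on $\mathbb{V}$ to an eigenvalue problem on each $\mathrm{Ind}_H^G(\mathbb{W}_\lambda)$, and then further to the irreducible summands of the latter.

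First I would invoke Proposition 2.3 to write $\mathbb{V}=\bigoplus_{\lambda\vdash k}\mathrm{Ind}_H^G(\mathbb{W}_\lambda)$, and note that each summand is $\Upsilon$-invariant (as already observed after Proposition 3.3). Hence the spectrum of $\Upsilon$ is the union of the spectra of its restrictions $\Upsilon_\lambda$ to these summands, and it suffices to compute each $\Upsilon_\lambda$. Next I would use the isomorphism $\mathrm{Ind}_H^G(\mathbb{W}_\lambda)\cong m_\lambda\,\mathrm{Ind}_H^G(S^\lambda\otimes\mathbb{C})$ together with Pieri's formula (Theorem 3.4) to decompose
\[
\mathrm{Ind}_H^G(\mathbb{W}_\lambda)\;\cong\;m_\lambda\bigoplus_{\lambda\prec\mu}S^\mu.
\]
Each $S^\mu$ appearing here is an irreducible $S_n$-module, and since $T_n$ is central in $\mathbb{C}[S_n]$ by Lemma 3.2, each such $S^\mu$ is $T_n$-invariant and hence $\Upsilon_\lambda$-invariant.

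Then I would apply Proposition 3.3 fiberwise: on a copy of $S^\mu\subseteq\mathrm{Ind}_H^G(\mathbb{W}_\lambda)$, the operator $\Upsilon$ acts as
\[
\Upsilon(v)=T_n\cdot v-\Bigl(\tbinom{k}{2}\tfrac{\chi_\lambda(\tau_k)}{\chi_\lambda(1)}+\tbinom{n-k}{2}\Bigr)v,
\]
and by Lemma 3.2 the first term acts as the scalar $\binom{n}{2}\chi_\mu(\tau_n)/\chi_\mu(1)$. Consequently $\Upsilon$ acts on $S^\mu$ as the single scalar
\[
\tbinom{n}{2}\tfrac{\chi_\mu(\tau_n)}{\chi_\mu(1)}-\tbinom{k}{2}\tfrac{\chi_\lambda(\tau_k)}{\chi_\lambda(1)}-\tbinom{n-k}{2},
\]
which is exactly the asserted eigenvalue. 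Collecting over all pairs $(\lambda,\mu)$ with $\lambda\vdash k$ and $\lambda\prec\mu\vdash n$ yields the full spectrum.

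The argument is almost entirely a bookkeeping exercise once the previous results are in place; the only point requiring care is to confirm that these blocks truly exhaust $\mathbb{V}$ and that mixing the $S_k$-label $\lambda$ with the $S_n$-label $\mu$ in the same formula is legitimate. This is handled by observing that the decomposition from Proposition 2.3 is a decomposition into $S_n$-submodules, so refining each $\mathrm{Ind}_H^G(\mathbb{W}_\lambda)$ by its $S_n$-irreducibles via Pieri gives an $S_n$-stable refinement of $\mathbb{V}$, and the two characters enter through two genuinely different mechanisms: $\chi_\lambda$ encodes the eigenvalue of $T_k$ on the $H$-invariant subspace $\mathbb{W}$ (from which the induced module is built), while $\chi_\mu$ encodes the eigenvalue of $T_n$ on the irreducible $S_n$-summand inside the induced module. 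I do not anticipate a genuine obstacle beyond this conceptual check.
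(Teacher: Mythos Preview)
Your proposal is correct and follows exactly the route the paper intends: the paper presents Theorem 3.5 as the immediate consequence of combining Proposition 2.3, Proposition 3.3, Lemma 3.2, and Pieri's formula (Theorem 3.4), without writing out a separate proof. Your write-up simply makes explicit the assembly of these ingredients, and the care you take about $\Upsilon$-invariance of the blocks and the distinct roles of $\chi_\lambda$ versus $\chi_\mu$ matches the paper's remarks after Proposition 3.3.
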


To see that these numbers are integers and to help calculate their values, one
can apply a formula originally attributed to Frobenius \cite{diaconis} and
\cite{ingram}. In particular, suppose $\lambda=(\lambda_{1},\ldots,\lambda
_{l})\vdash m$. If $\tau\in S_{m}$ is any transposition, then we have the
following:
\[
\binom{m}{2}\frac{\chi_{\lambda}\left(  \tau\right)  }{\chi_{\lambda}\left(
1\right)  }=%
{\displaystyle\sum\limits_{j=1}^{l}}
\left(  \binom{\lambda_{j}-j+1}{2}-\binom{j}{2}\right)  =\left(
{\displaystyle\sum\limits_{j=1}^{l}}
\binom{\lambda_{j}-j+1}{2}\right)  -\binom{l+1}{3}%
\]
since
\[%
{\displaystyle\sum\limits_{j=1}^{l}}
\binom{j}{2}=\binom{l+1}{3}.
\]

\section{A conjecture by Chen, Ghorbani and Wong}

In \cite[Conjecture 3]{chen1}, Chen Ghorbani and Wong conjecture that for $k$
fixed and $n$ large that $-k$ is the only negative eigenvalue in the spectrum
of the $\left(  n,k\right)  $-arrangement graph. In this section we prove
their conjecture. Let $\mathbb{Q}$ denote the rational numbers.

\begin{proposition}
There is a polynomial $p(x)\in$ $\mathbb{Q}\left[  x\right]  $ such that when
$n>p(k)$ then $-k$ is the only negative eigenvalue in the spectrum of the
$\left(  n,k\right)  $-arrangement graph.
\end{proposition}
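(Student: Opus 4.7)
The plan is to use the eigenvalue formula of Theorem 3.4 together with Frobenius's formula to analyze the $n$-dependence of each eigenvalue, with $k$ and $\lambda\vdash k$ held fixed. The key structural observation is that the Pieri condition $\lambda\prec\mu$ forces $\mu$ to have one very long first row. Indeed, going from $Y(\lambda)$ to $Y(\mu)$ we add $n-k$ boxes with at most one box per column, and $\lambda$ has only $\lambda_1\leq k$ columns, so at least $n-2k$ of the new boxes must sit in genuinely new columns. Equivalently, $\mu_1\geq n-k$. Setting $s:=n-\mu_1\in\{0,1,\ldots,k\}$ and $\nu:=(\mu_2,\mu_3,\ldots)$, the partition $\nu$ satisfies $|\nu|=s\leq k$, has at most $k$ parts, and each part is bounded by $k$.

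Next, plug this parametrization into Frobenius. The only term involving $n$ is $\binom{\mu_1}{2}=\binom{n-s}{2}$, and a direct computation gives $\binom{n-s}{2}-\binom{n-k}{2}=(k-s)\bigl(n-(k+s+1)/2\bigr)$. All remaining contributions to the eigenvalue depend only on $\lambda$ and $\nu$, and each binomial coefficient appearing in Frobenius's formula for $\lambda$ or for $\mu$ (terms of the form $\binom{\lambda_i-i+1}{2}$, $\binom{\nu_i-i}{2}$, $\binom{j+1}{3}$, $\binom{l+1}{3}$) is bounded by $O(k^3)$ because all row lengths and row counts are at most $k$. Collecting these, the eigenvalue takes the shape $E_{\mu,\lambda}=(k-s)\,n-D(\lambda,\nu)$ with $|D(\lambda,\nu)|\leq P(k)$ for some explicit polynomial $P$.

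The argument then splits into two cases. If $s<k$, the leading coefficient $k-s\geq 1$, so $E_{\mu,\lambda}\geq n-P(k)$, which is strictly positive as soon as $n>P(k)$. If $s=k$, then $|\nu|=k=|\lambda|$; combined with the Pieri interlacing $\lambda_i\geq\nu_i\geq\lambda_{i+1}$, the equality of totals forces $\nu_i=\lambda_i$ for all $i$, so the only $\mu$ in this case is $\mu=(n-k,\lambda_1,\ldots,\lambda_j)$. For this $\mu$, a telescoping calculation using the identities $\binom{a+1}{2}-\binom{a}{2}=a$ and $\sum_{i=1}^{j}i=\binom{j+1}{2}$ collapses the Frobenius expression to give $E_{\mu,\lambda}=-k$ exactly, independently of $\lambda$.

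Taking $p(x):=P(x)+1$ then yields the statement: whenever $n>p(k)$, every eigenvalue with $s<k$ is positive, while the unique eigenvalue with $s=k$ (which exists for each $\lambda\vdash k$) is precisely $-k$. The main obstacle I expect is organizing the bookkeeping of Case $s<k$ cleanly enough to be certain that the non-$n$ contributions really are bounded by a polynomial in $k$, uniformly over all admissible $\lambda$ and $\nu$; the rest is the direct Frobenius calculation in Case $s=k$, which is a short but slightly delicate manipulation of binomial coefficients.
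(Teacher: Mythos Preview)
Your approach is correct and essentially identical to the paper's: both arguments first observe that the Pieri condition forces $\mu_1\geq n-k$, then split into the case $\mu_1=n-k$ (your $s=k$, the paper's Lemma~4.2), where a Frobenius telescoping computation yields exactly $-k$, and the case $\mu_1>n-k$ (your $s<k$, the paper's Lemma~4.3), where the $n$-linear term dominates the remaining contributions, all of which are bounded polynomially in $k$. The only cosmetic difference is that you parametrize by $s=n-\mu_1$ and extract the linear-in-$n$ coefficient $(k-s)$ explicitly, whereas the paper bounds $\binom{\mu_1}{2}\geq\binom{n-k+1}{2}$ directly and arrives at the explicit threshold $p(k)=\tfrac{1}{6}k(k+1)(k+5)$.
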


We prove the proposition by establishing two lemmas.

\begin{lemma}
Suppose $\lambda=(\lambda_{1},\ldots,\lambda_{q})\vdash k$, $\mu=(\mu
_{1},\ldots,\mu_{r})\vdash n$, $\lambda\prec\mu$ and $\mu_{1}=n-k.$ If
$n-k>k$. Then
\[
\binom{n}{2}\frac{\chi_{\mu}\left(  \tau_{n}\right)  }{\chi_{\mu}\left(
1\right)  }-\binom{k}{2}\frac{\chi_{\lambda}\left(  \tau_{k}\right)  }%
{\chi_{\lambda}\left(  1\right)  }-\binom{n-k}{2}=-k.
\]

\end{lemma}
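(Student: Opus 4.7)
The plan is to apply the Frobenius formula given just after Theorem~3.5 to both character ratios and then exploit the rigidity forced by the hypotheses to identify $\mu$ explicitly.

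First I would unpack what $\lambda\prec\mu$ means combined with $\mu_1=n-k$ and $n-k>k$. Pieri's rule adds at most one box per column, i.e.\ $\mu/\lambda$ is a horizontal strip, which is equivalent to the interlacing $\mu_{i+1}\le\lambda_i$ for every $i\ge 1$. Writing $\mu'_i=\mu_{i+1}$, the partition $(\mu'_1,\mu'_2,\ldots)$ has $\mu'_i\le\lambda_i$ for all $i$ and total size $n-\mu_1=n-(n-k)=k=|\lambda|$. A termwise-bounded partition of $k$ that sums to $k$ must be equal to $\lambda$ termwise, so $\mu_{i+1}=\lambda_i$ for all $i\ge 1$. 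The assumption $n-k>k\ge\lambda_1$ guarantees that $(n-k,\lambda_1,\ldots,\lambda_q)$ is a legitimate partition, so we conclude $r=q+1$ and
\[
\mu=(n-k,\lambda_1,\lambda_2,\ldots,\lambda_q).
\]
This identification is the only genuinely non-computational step, and I expect it to be the main obstacle.

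Next I would apply Frobenius to rewrite the left-hand side as
\[
\left(\sum_{i=1}^{r}\binom{\mu_i-i+1}{2}-\binom{r+1}{3}\right)-\left(\sum_{j=1}^{q}\binom{\lambda_j-j+1}{2}-\binom{q+1}{3}\right)-\binom{n-k}{2}.
\]
The $i=1$ term of the first sum is $\binom{n-k}{2}$, which cancels the trailing $-\binom{n-k}{2}$. For $i\ge 2$, substitute $i=j+1$ and $\mu_i=\lambda_{j}$ from the identification above so that $\mu_i-i+1=\lambda_j-j$. The surviving expression becomes
\[
\sum_{j=1}^{q}\Bigl(\binom{\lambda_j-j}{2}-\binom{\lambda_j-j+1}{2}\Bigr)-\binom{q+2}{3}+\binom{q+1}{3}.
\]

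Finally I would finish via two elementary identities: $\binom{x}{2}-\binom{x+1}{2}=-x$, which makes the $j$-sum equal to $-\sum_{j=1}^{q}(\lambda_j-j)=-k+\binom{q+1}{2}$; and Pascal's rule $\binom{q+2}{3}-\binom{q+1}{3}=\binom{q+1}{2}$. These contribute $-k+\binom{q+1}{2}-\binom{q+1}{2}=-k$, yielding the claim.
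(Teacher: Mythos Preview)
Your proposal is correct and follows essentially the same route as the paper: first pin down $\mu=(n-k,\lambda_1,\ldots,\lambda_q)$ from $\lambda\prec\mu$, $\mu_1=n-k$, and $n-k>k$, then apply the Frobenius formula and telescope. Your identification of $\mu$ via the interlacing inequality $\mu_{i+1}\le\lambda_i$ together with the size constraint is a clean variant of the paper's box-counting argument, and your bookkeeping with $\binom{q+2}{3}-\binom{q+1}{3}=\binom{q+1}{2}$ is equivalent to the paper's use of $1+2+\cdots+q=\binom{q+1}{2}$.
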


\begin{proof}
The relation $\lambda\prec\mu$ means we have to add $n-k$ boxes to the Young
diagram $Y(\lambda)$, no more than one to each column, to obtain the Young
diagram $Y(\mu)$. Since there are $\lambda_{1}\leq k<n-k$ columns in
$Y(\lambda)$ we must add a positive number of boxes $a$ to the first row to
obtain $\lambda_{1}+a=\mu_{1}=n-k$. But then the number of left over boxes to
be added to $Y(\lambda)$ is exactly $n-k-a=\lambda_{1}$ so we must add one box
to each column. Thus
\[
\mu_{j+1}=\lambda_{j}\text{ for }1\leq j\leq q\text{.}%
\]
Using the formula for the characters and the fact that $r=q+1,$ we obtain
\[
\binom{n}{2}\frac{\chi_{\mu}\left(  \tau_{n}\right)  }{\chi_{\mu}\left(
1\right)  }-\binom{k}{2}\frac{\chi_{\lambda}\left(  \tau_{k}\right)  }%
{\chi_{\lambda}\left(  1\right)  }=
\]%
\[
\binom{n-k}{2}-\binom{q+1}{2}+\binom{\lambda_{1}-1}{2}-\binom{\lambda_{1}}%
{2}+\cdots+\binom{\lambda_{q}-q}{2}-\binom{\lambda_{q}-q+1}{2}=
\]

\[
\binom{n-k}{2}-\binom{q+1}{2}+1-\lambda_{1}+1-(\lambda_{2}-1)+\cdots
+1-(\lambda_{q}-(q-1))=
\]%
\[
\binom{n-k}{2}-\binom{q+1}{2}+1+2+\cdots+q-(\lambda_{1}+\cdots+\lambda_{q}).
\]
Thus the result follows.
\end{proof}

\begin{lemma}
Suppose $\lambda=(\lambda_{1},\ldots,\lambda_{q})\vdash k$, $\mu=(\mu
_{1},\ldots,\mu_{r})\vdash n$, $\lambda\prec\mu$ and $\mu_{1}>n-k.$There is a
polynomial $p(x)\in\mathbb{Q}\left[  x\right]  $ such that when $n>p(k)$ then
the associated eigenvalue is positive.
\end{lemma}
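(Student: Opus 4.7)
The plan is to apply Theorem 3.4 together with the Frobenius formula displayed immediately after it, using the hypothesis $\mu_{1}>n-k$ to isolate a single term of order $n$ in the eigenvalue formula and to bound everything else by a polynomial in $k$ alone. The hypothesis forces $\mu_{2}+\cdots+\mu_{r}=n-\mu_{1}<k$, so $\mu_{j}<k$ for every $j\geq 2$ and the number of parts satisfies $r\leq k$. This combinatorial bound is what makes the remainder estimates uniform in $n$.

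Writing $\mu_{1}=(n-k)+s$ with $s\geq 1$, I would use the Frobenius formula to split
\[
\binom{n}{2}\frac{\chi_{\mu}(\tau_{n})}{\chi_{\mu}(1)} = \binom{\mu_{1}}{2} + R_{\mu}, \qquad R_{\mu}:=\sum_{j=2}^{r}\binom{\mu_{j}-j+1}{2}-\binom{r+1}{3}.
\]
Since $R_{\mu}$ involves only the parts $\mu_{j}$ with $j\geq 2$ (each bounded by $k$) and the index $r$ (also bounded by $k$), there is an explicit polynomial $C_{1}(x)\in\mathbb{Q}[x]$ with $|R_{\mu}|\leq C_{1}(k)$ independently of $n$. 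A direct computation then yields
\[
\binom{\mu_{1}}{2}-\binom{n-k}{2} = s(n-k)+\binom{s}{2} \;\geq\; n-k,
\]
which is the only contribution of order $n$ in the eigenvalue formula of Theorem 3.4. Finally, $|\chi_{\lambda}(\tau_{k})|\leq\chi_{\lambda}(1)$ by the standard unitarity argument (the representing matrix is similar to a unitary one), so the $\lambda$-term is bounded in absolute value by $\binom{k}{2}$.

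Combining the three estimates shows that the eigenvalue is at least $(n-k)-C_{1}(k)-\binom{k}{2}$, so setting $p(x)=x+C_{1}(x)+\binom{x}{2}$ (which lies in $\mathbb{Q}[x]$) gives a polynomial such that $n>p(k)$ implies the eigenvalue is strictly positive. Crucially, this $p$ is uniform in the choice of $\lambda$ and $\mu$ satisfying the hypothesis, since $C_{1}(k)$ depends only on $k$. I expect the only mildly delicate step to be the bookkeeping in bounding $R_{\mu}$: one must observe that the hypothesis $\mu_{1}>n-k$ confines the ``tail'' of $\mu$ entirely to a range determined by $k$, so that crude estimates like $\binom{\mu_{j}-j+1}{2}\leq\binom{2k}{2}$ for $j\geq 2$ and $\binom{r+1}{3}\leq\binom{k+1}{3}$ already suffice to produce an explicit $C_{1}$ of degree at most three in $k$.
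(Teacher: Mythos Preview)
Your argument is correct and follows essentially the same strategy as the paper: isolate the leading term $\binom{\mu_1}{2}$ in the Frobenius formula, use $\mu_1>n-k$ to extract a contribution of size at least $n-k$ from $\binom{\mu_1}{2}-\binom{n-k}{2}$, bound the $\lambda$-term by $\binom{k}{2}$, and bound the remaining $\mu$-terms by a polynomial in $k$ via a bound on $r$. The only minor differences are that the paper bounds $r$ through the relation $\lambda\prec\mu$ (yielding $r\le q+1\le k+1$) rather than through $\mu_2+\cdots+\mu_r<k$ as you do, and that the paper simply drops the terms $\binom{\mu_j-j+1}{2}$ for $j\ge 2$ (they are nonnegative for integer arguments) instead of bounding $|R_\mu|$, which leads directly to the explicit choice $p(k)=\tfrac{1}{6}k(k+1)(k+5)$.
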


\begin{proof}
Since
\[
\frac{\chi_{\lambda}\left(  \tau_{k}\right)  }{\chi_{\lambda}\left(  1\right)
}\leq1
\]
it follows that
\[
-\binom{k}{2}\frac{\chi_{\lambda}\left(  \tau_{k}\right)  }{\chi_{\lambda
}\left(  1\right)  }\geq-\binom{k}{2}.
\]
Using the character formula for the character $\chi_{\mu}$ and the fact that
$\mu_{1}>n-k$ we see that
\[
\binom{n}{2}\frac{\chi_{\mu}\left(  \tau_{n}\right)  }{\chi_{\mu}\left(
1\right)  }-\binom{n-k}{2}\geq\binom{n-k+1}{2}-\binom{n-k}{2}-%
{\displaystyle\sum\limits_{j=1}^{r}}
\binom{j}{2}\geq
\]%
\[
n-k-\binom{r+1}{3}.
\]
Now use the fact that $\lambda\prec\mu$. Since the young diagram $Y(\mu)$ is
obtained from the diagram $Y(\lambda)$ by adding at most one box to a column
it follows that $r\leq q+1\leq k+1.$ Putting this together we have
\[
\binom{n}{2}\frac{\chi_{\mu}\left(  \tau_{n}\right)  }{\chi_{\mu}\left(
1\right)  }-\binom{k}{2}\frac{\chi_{\lambda}\left(  \tau_{k}\right)  }%
{\chi_{\lambda}\left(  1\right)  }-\binom{n-k}{2}\geq n-k-\binom{k+2}%
{3}-\binom{k}{2}.
\]
This will be positive if
\[
n>\binom{k+2}{3}+\binom{k}{2}+k.
\]
Therefore we could use
\[
p(k)=\frac{1}{6}k\left(  k+1\right)  \left(  k+5\right)
\]

\end{proof}

To finish the proof of the conjecture we only need to see that the condition
$\lambda\prec\mu$ implies that $\mu_{1}\geq n-k.$ But this is clear since
$\lambda_{1}$ is the number of columns in the Young diagram $Y(\lambda)$ and
we need to add a total of $n-k$ boxes to obtain the Young diagram $Y(\mu)$.
Hence, if we add $a$ boxes to the first row to obtain $\mu_{1}=\lambda_{1}+a$
we can at add at most $\lambda_{1}$ additional boxes. Thus we need
$\lambda_{1}+a\geq n-k$.

We also make the following observation. Our proof shows that when $n>p\left(
k\right)  $, given $\lambda=(\lambda_{1},\ldots,\lambda_{q})\vdash k$ then
there is exactly one partition $\mu\left(  \lambda\right)  =\left(
n-k,\lambda_{1},\ldots,\lambda_{q}\right)  \vdash n$ such that $\lambda
\prec\mu\left(  \lambda\right)  $ and such that the formula for the
eigenvalues yields the value $-k$ (this is the partition descibed in the proof
of Lemma 4.2). Since
\[
\text{Ind}_{H}^{G}(S^{\lambda}\otimes\mathbb{C})\cong%
{\displaystyle\bigoplus\limits_{\lambda\prec\mu}}
S^{\mu}%
\]
and since then multiplicity of $S^{\lambda}$ in $\mathbb{W}$ is $\chi
_{\lambda}\left(  1\right)  $ it follows that the multiplicity of the
eigenvalue $-k$, for $n>p\left(  k\right)  ,$ is exactly
\[%
{\displaystyle\sum\limits_{\lambda\vdash k}}
\chi_{\mu(\lambda)}\left(  1\right)  \chi_{\lambda}\left(  1\right)  .
\]

\end{document}